\newtheorem{theorem}{Theorem}[section]
\newtheorem{lemma}[theorem]{Lemma}
\newtheorem{proposition}[theorem]{Proposition}
\theoremstyle{definition}
\newtheorem{remark}[theorem]{Remark}
\numberwithin{equation}{section}
\newcommand\eps{\varepsilon}
\newcommand\E{\mathbb{E}}
\newcommand\R{\mathbb{R}}
\newcommand\Z{\mathbb{Z}}
\newcommand\C{\mathbb{C}}
\newcommand{\CP}{\mathcal{P}}
\newcommand{\vdelta}{\underline{\delta}}
\newcommand{\wh}{\widehat}
\begin{document}
\title{Density versions of the binary Goldbach problem}

\author[Alsetri]{Ali Alsetri}
\address{Department of Mathematics, University of Kentucky\\
715 Patterson Office Tower\\
Lexington, KY 40506\\
USA}
\email{alialsetri@uky.edu}

\author[Shao]{Xuancheng Shao}
\address{Department of Mathematics, University of Kentucky\\
715 Patterson Office Tower\\
Lexington, KY 40506\\
USA}
\email{xuancheng.shao@uky.edu}
\subjclass[2020]{Primary 11P32, 11B30}
\keywords{Fourier analytic transference principle, pseudorandom majorant, almost all binary Goldbach}
\thanks{XS was supported by NSF grant DMS-2200565.}


\maketitle

\begin{abstract}
Let $\delta > 1/2$. We prove that if $A$ is a subset of the primes such that the relative density of $A$ in every reduced residue class is at least $\delta$, then almost all even integers can be written as the sum of two primes in $A$. The constant $1/2$ in the statement is best possible. Moreover we give an example to show that for any $\eps > 0$ there exists a subset of the primes with relative density at least $1 - \eps$ such that $A+A$ misses a positive proportion of even integers.
\end{abstract}

\section{Introduction}

 Let $\CP$ be the set of all primes and let $A\subset \CP$ be a subset. This paper studies the representation of even integers as sums of two primes belonging to $A$. The famous Goldbach conjecture, which remains wide open, states that every even integer $n \geq 4$ can be written as the sum of two primes. The ternary version which concerns representing odd integers as sums of three primes has been much more tractable. Vinogradov \cite{Vinogradov} proved in 1937 that every sufficiently large odd integer is a sum of three primes (see also \cite[Chapter 26]{Davenport}). This is now known to hold for all odd integers at least $7$, thanks to work of Helfgott \cite{Helfgott}.
 
Returning to the binary Goldbach problem, Estermann \cite{Estermann} showed in 1938 that almost every even integer can be written as the sum of two primes. More precisely if $E(N)$ denotes the set of even integers $n \leq N$ which cannot be written as the sum of two primes, then 
$$\frac{E(N)}{N} \ll_A (\log N)^{-A} $$
for every $A > 0$. A power saving for the error term was first obtained by Montgomery and Vaughan \cite{MontgomeryVaughan}, who showed that
$$\frac{E(N)}{N} \ll N^{-\delta} $$
for some positive constant $\delta > 0$. Since then there have been a series of improvements on the precise value of $\delta$, leading to the current record of $\delta = 0.28$ due to Pintz \cite{Pintz}.

In this paper we study Goldbach-type problems with primes restricted to subsets of $\CP$.
For a subset $A \subset \CP$, the relative lower density of $A$ in $\CP$ is defined by
$$ \vdelta(A) = \liminf_{N\rightarrow\infty} \frac{|A \cap [1,N]|}{|\CP \cap [1,N]|}. $$ 
In recent years density versions of Vinogradov's three primes theorem have been obtained \cite{LiPan, Shao, Shen}. For example, in \cite{Shao} it was proved that if $\vdelta(A) > 5/8$ then all sufficiently large odd positive integers can be written as a sum of three primes in $A$. See also \cite{MatomakiShao, MatomakiMaynardShao, Teravainen, Grimmelt} for results with (special) sparse subsets of primes.

Motivated by these results, we seek to obtain a density version of the almost all binary Goldbach problem. The binary problem for small positive density subsets of primes has been studied in \cite{RamareRuzsa, ChipeniukHamel, Matomaki}. In particular, Matom\"{a}ki \cite{Matomaki} proved that if $\vdelta(A) = \alpha$ for some positive constant $\alpha > 0$, then the sumset $A+A := \{p_1+p_2: p_1,p_2 \in A\}$ has positive lower density in the integers. Moreover, the lower density of $A+A$ is at least
$$ (e^{-\gamma}-o(1)) \frac{\alpha}{\log\log (1/\alpha)}, $$
where $\gamma$ is the Euler-Mascheroni constant and $o(1)$ denotes a quantity that tends to $0$ as $\alpha\rightarrow 0$. See also \cite{GrimmeltTeravainen} for related results with $A$ the set of almost twin primes.

 We seek conditions on $A \subset \CP$ which guarantee $A+A$ contains almost all even integers, or equivalently, $A+A$ has density $1/2$ in the integers. Specifically we ask whether there exists a positive constant $\alpha < 1$, such that if $\vdelta(A) \geq \alpha$ then $A+A$ contains almost all even integers. We show that, unlike the ternary case, such an $\alpha$ does not exist.
 
 \begin{theorem}\label{thm-binary}
For any $\eps > 0$ there exists a subset $A \subset \CP$ with $\vdelta(A) > 1-\eps$, such that a positive proportion of the even positive integers cannot be written as a sum of two primes in $A$.
\end{theorem}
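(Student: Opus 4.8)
The plan is to realize $A$ as the set of primes falling into a well‑chosen union of reduced residue classes modulo a highly composite modulus $q$. The naive attempt --- work modulo a prime $q$ and discard the primes in a small set of residue classes, so that the surviving class set $S$ satisfies $2\notin S+S$ --- is doomed, because if $S\subseteq\Z/q\Z$ has more than $q/2$ elements then $S+S=\Z/q\Z$, and for $q$ prime we are forced to keep nearly all of the $q-1$ reduced residues. The way around this is that for $q$ a product of many small primes the reduced residues $(\Z/q\Z)^{\ast}$ form a vanishingly small fraction of $\Z/q\Z$, and --- what turns out to be the decisive point --- only a vanishingly small fraction of them participate in representations of $2$ as a sum of two units. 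Thus one can afford to throw away an $\eps$‑proportion of the reduced residues and still excise the residue class $2\bmod q$ from the resulting sumset.

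Concretely, I would fix $\eps>0$, let $q=2\cdot 3\cdot 5\cdots p_k$ be the product of the first $k$ primes with $k$ large, and introduce
$$ U'=\{\, u\in(\Z/q\Z)^{\ast}:\ 2-u\in(\Z/q\Z)^{\ast}\,\}. $$
A Chinese Remainder Theorem count gives $|U'|=\prod_{2<p\le p_k}(p-2)$, so that $|U'|/\phi(q)=\prod_{2<p\le p_k}\frac{p-2}{p-1}\to 0$ as $k\to\infty$ by Mertens' theorem. The map $u\mapsto 2-u$ is an involution of $U'$ whose only fixed point is $u=1$; keeping $1$ together with one representative from each of its $2$‑cycles produces a set $R\subseteq(\Z/q\Z)^{\ast}$ with $|R|=\tfrac12(|U'|+1)$ whose complement $S:=(\Z/q\Z)^{\ast}\setminus R$ has no two elements, equal or not, summing to $2\pmod q$; that is, $2\notin S+S$. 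Choosing $k$ large enough that $|R|<\eps\,\phi(q)$ --- possible since $|R|/\phi(q)\to 0$ --- I set
$$ A=\{\, p\in\CP:\ p\bmod q\in S\,\}, $$
noting that this $A$ contains no prime dividing $q$, so every prime of $A$ is coprime to $q$.

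It then remains to verify two things. First, $\vdelta(A)>1-\eps$: since $S$ is a fixed set of reduced residues mod $q$, the prime number theorem in arithmetic progressions gives $|A\cap[1,N]|=\frac{|S|}{\phi(q)}\pi(N)(1+o(1))$, hence $\vdelta(A)=|S|/\phi(q)=1-|R|/\phi(q)>1-\eps$. Second, $A+A$ misses every $n\equiv 2\pmod q$: if $n=p_1+p_2$ with $p_1,p_2\in A$, then $p_1\bmod q$ and $p_2\bmod q$ are elements of $S$ whose sum is $\equiv n\equiv 2\pmod q$, contradicting $2\notin S+S$. As $q$ is even, every such $n$ is even, and the integers $\equiv 2\pmod q$ constitute a proportion $2/q>0$ of the even positive integers, which is the desired conclusion.

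The only genuine obstacle here is conceptual: recognizing that the $1/2$‑density barrier which defeats the prime‑modulus approach disappears once one works modulo a primorial, where not only $\phi(q)/q$ but, more to the point, $|U'|/\phi(q)$ tends to $0$. Once that is seen, the remaining ingredients --- the CRT evaluation of $|U'|$, the (vertex‑cover style) construction of $S$, and the two verifications above --- are all routine.
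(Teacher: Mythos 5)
Your construction is correct and is essentially the same as the paper's: both restrict $A$ to a union of reduced residue classes modulo a product of the small primes, use $\prod_{p\le z}\frac{p-2}{p-1}\to 0$ (Mertens) to make the deleted set of classes an $\eps$-fraction of $(\Z/q\Z)^{\ast}$ while removing a fixed residue class from $A+A$, and then invoke Dirichlet's theorem for the density. The only cosmetic difference is which classes you delete (a vertex cover of the involution $u\mapsto 2-u$ on $U'$, targeting $2\bmod q$ with $q$ even) versus the paper's CRT product set targeting $1\bmod m$ with $m$ odd; quantitatively both delete about $\tfrac12\prod\frac{p-2}{p-1}$ of the units.
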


However the situation changes if we impose additional local assumptions about the set $A$. For a reduced residue class $b\pmod W$, we define the relative lower density of $A$ in primes within this residue class by
$$ \vdelta(A; W,b) = \liminf_{N\rightarrow\infty} \frac{|A \cap \{1 \leq n\leq N: n \equiv  b\pmod{W}\}|}{|\CP \cap \{1 \leq n\leq N: n \equiv  b\pmod{W}\}|}. $$

\begin{theorem}\label{density-prime-1/2}
Let $A \subset \CP$ be a subset such that  
$$ \inf_{W,b}\vdelta(A; W,b) > 1/2, $$ 
where the infimum is taken over all reduced residue classes $b\pmod{W}$.  Then almost all even positive integers $N$ can be written as $N = p_1+p_2$ with $p_1,p_2 \in A$.
\end{theorem}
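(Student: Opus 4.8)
The plan is to run a Fourier‑analytic transference argument in which, crucially, only a bound valid for almost all $N$ is sought, so that all error terms can be handled in mean square over $N$. Fix $\eta>0$ with $\delta:=\inf_{W,b}\vdelta(A;W,b)\ge\frac12+2\eta$. A dyadic decomposition of the target reduces matters to showing that, for each large $X$, all but $o(X)$ even $N\in(X,2X]$ lie in $A+A$; a routine diagonalisation lets us work with $W=\prod_{p\le w}p$ for some $w=w(X)\to\infty$ slowly. Perform the $W$‑trick: with $M=2X/W$, for each reduced $b\pmod W$ let $f_b\colon\{1,\dots,M\}\to\R_{\ge0}$ be the indicator of $\{n\le M:Wn+b\in A\}$, renormalised so that $\E f_b$ equals the relative density of $A$ among primes in the class $b\pmod W$ on an initial segment. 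The hypothesis, applied to the intervals $[1,Y]$ with $Y\in[X,2X]$ (which are long, so the $\liminf$ applies with a uniform $o(1)$), gives $\E f_b\ge\delta-o(1)$ and, more generally, $\sum_{n\le k}f_b(n)\ge(\delta-o(1))\,k$ for all $k\in[M/2,M]$.

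The main analytic input is a pseudorandom majorant: a function $\nu$ with $f_b\le\nu$ for every reduced $b$, $\E\nu=1+o(1)$, obeying the linear‑forms and correlation estimates required by the transference principle — this is the usual $W$‑tricked Selberg/GPY sieve weight, of the type used in \cite{Shao} and \cite{Matomaki}. The transference (dense‑model) theorem then yields, for each reduced $b$, a splitting $f_b=g_b+h_b$ with $0\le g_b\le 1+o(1)$, $\E g_b=\E f_b\ge\delta-o(1)$, and $\|\widehat{h_b}\|_\infty$ exceedingly small — say $\|\widehat{h_b}\|_\infty\ll M(\log X)^{-10}$, which is attainable by letting $w\to\infty$ and choosing $\nu$ pseudorandom at the matching level. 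In particular $\big|\sum_{n\in J}h_b(n)\big|\ll\|\widehat{h_b}\|_\infty\log M=o(M)$ for every interval $J$, so the interval sums of $g_b$ and $f_b$ agree up to $o(M)$.

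Now fix an even $N\in(X,2X]$ and let $r(N)$ be the number of representations $N=m_1+m_2$ with $m_1,m_2\in A$; splitting the $m_i$ into classes mod $W$ writes $r(N)$ as a constant times $\sum_{b_1+b_2\equiv N\,(W)}(f_{b_1}*f_{b_2})(k)$, the sum over reduced pairs, where $k=(N-b_1-b_2)/W\in[M/2-O(1),M]$ and $*$ is convolution on $\{1,\dots,M\}$; let $R(N)$ be the same expression with each $f_{b_i}$ replaced by $g_{b_i}$. Since $N$ is even and $2\mid W$, there are $\gg_W 1$ valid reduced pairs. For each such pair set $J=\{1,\dots,k-1\}$, of length $\gg M$; the convolution value is $\sum_{n\in J}g_{b_1}(n)\,g_{b_2}(k-n)$, in which both $n\mapsto g_{b_1}(n)$ and $n\mapsto g_{b_2}(k-n)$ are functions on $J$ bounded by $1+o(1)$ with sum $\ge(\delta-o(1))|J|$ — using $\sum_{n\in J}g_b=\sum_{n\in J}f_b-\sum_{n\in J}h_b$, the previous paragraph, and the hypothesis applied to $[1,Wk]$. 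By the elementary inequality for two non‑negative functions on $J$ that are $\le 1+o(1)$ and have average $>\frac12$, the convolution value is $\ge(2\delta-1-o(1))|J|\gg_\eta M$. Summing over the valid pairs gives $R(N)\gg_{W,\eta}X(\log X)^{-2}$, uniformly over even $N\in(X,2X]$. This is exactly where the hypothesis $\delta>\frac12$ is used, and — in view of Theorem~\ref{thm-binary} — where it is seen to be best possible.

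Finally, expanding $f_b=g_b+h_b$ shows $r(N)-R(N)$ is a fixed combination of the convolutions $g_{b_1}*h_{b_2}$, $h_{b_1}*g_{b_2}$ and $h_{b_1}*h_{b_2}$ over the relevant pairs; Plancherel together with $\|f_b\|_2^2\ll M\log X$ (as $f_b$ is bounded by $\ll\log X$ and has bounded mean) and $\|g_b\|_2^2\ll M$ gives $\sum_{X<N\le 2X}|r(N)-R(N)|^2\ll_{W}\|\widehat h\|_\infty^2\,M\log X$, which by the choice of $\|\widehat h\|_\infty$ is $o_{W,\eta}\!\big((X(\log X)^{-2})^2\cdot X\big)$. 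Chebyshev's inequality then bounds the number of even $N\in(X,2X]$ with $r(N)=0$ by $\sum_N|r(N)-R(N)|^2/(\min_N R(N))^2=o(X)$, as needed. I expect the genuinely delicate point to be the transference step, specifically producing a model $g_b$ whose sup is $1+o(1)$ rather than merely $O(1)$: because $\delta$ is only assumed $>\frac12$, any fixed multiplicative loss in the majorant would wreck the two‑densities estimate in the main‑term step, so the pseudorandom majorant has to be chosen without such a loss (and strong enough that $\|\widehat{h_b}\|_\infty$ beats the $(\log X)^{O(1)}$ factors above). The restriction to \emph{almost all} $N$ is intrinsic to the method, since $r(N)-R(N)$ is controlled only on average.
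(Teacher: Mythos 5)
Your overall strategy coincides with the paper's: a $W$-trick, a dense-model decomposition $f_b=g_b+h_b$ against a pseudorandom majorant, a pigeonhole argument showing that two densities exceeding $1/2$ force the model convolution $g_{b_1}*g_{b_2}$ to be $\gg_\eta M$ everywhere, and an $L^2$/Chebyshev argument to pass from the model back to $f$ for almost all $N$. The main-term step and the handling of the interval endpoints are fine. But the final mean-square step contains a genuine gap. You need $\sum_N|r(N)-R(N)|^2=o\big(X\cdot(\min_N R(N))^2\big)$, and when you expand the cross terms the worst contribution is $\sum_k|h_{b_1}*h_{b_2}(k)|^2\leq\|\widehat{h}\|_\infty^2\|h\|_2^2$ with $\|h\|_2^2\asymp\|f\|_2^2\asymp_W M\log X$ (since $f_b$ takes values of size $\log X$ on a set of density $1/\log X$). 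Comparing with $(\min_N R(N))^2\cdot X$, this forces $\|\widehat{h_b}\|_\infty$ to save at least a power of $\log X$ over the trivial bound $\ll M$ --- you ask for $\|\widehat{h_b}\|_\infty\ll M(\log X)^{-10}$. No transference/dense-model theorem can deliver this with any available prime majorant: the Bohr-set (or energy-increment) construction of $g_b$ yields $\|\widehat{h_b}\|_\infty\leq\eps M$ only when the majorant's Fourier discrepancy $\|\widehat{\nu-1}\|_\infty$ is smaller than a quantity of the shape $(c\eps)^{(1/\eps)^{O(1)}}$, and for the $W$-tricked prime (or Selberg-weight) majorant with $W\leq\log X$ the discrepancy is only $o(1)$ at a rate governed by $w\ll\log\log X$ --- roughly $1/\log w$, nowhere near any negative power of $\log X$. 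So the quantity you rely on simply is not available, and the step you flag as delicate (the sup bound $\|g_b\|_\infty\leq 1+o(1)$) is not the real obstruction; that part is standard.

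The missing ingredient is a restriction (mean-value) estimate $\|\widehat{f_b}\|_p\ll 1$ for some exponent $p\in(2,4)$ (in the paper's normalization; this is \cite[Lemma 6.6]{Green} with $p=3$). This is exactly what removes the $\log X$ loss: instead of bounding the $L^2$ norm of $\widehat{g_1}\widehat{g_2}-\widehat{f_1}\widehat{f_2}$ by $\|\widehat{h}\|_\infty\|h\|_2$, one interpolates it between the $L^\infty$ bound $\ll\eps$ and the $L^{p/2}$ bound $\ll M^2$ coming from restriction, obtaining $\ll\eps^{1-p/4}$ with no logarithmic factors; a constant-level saving $\eps=\eps(\delta,\eta)$ in $\|\widehat{f}-\widehat{g}\|_\infty$ then suffices, and that level of saving the dense-model construction does provide. (The paper also tests the difference $g_1*g_2-f_1*f_2$ directly against the indicator of the exceptional set rather than forming a full variance, but that is cosmetic; the essential point is the exponent $p<4$.) Without adding the restriction estimate, your argument does not close.
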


\begin{remark}
The constant $1/2$ is sharp. For any $\alpha > 2$ we may define
$$ A = \{p \in \CP: p \in [1,N_1] \cup [\alpha N_1,N_2] \cup [\alpha N_2,N_3]\cdots\}, $$
where $N_1 < N_2 < N_3<\cdots$ is a rapidly increasing sequence. Then $\vdelta(A; W, b) = 1/\alpha$ for all reduced residue classes $b\pmod{W}$ and $A+A$ misses a positive proportion of even integers.
\end{remark}

This result is proved using a variant of the Fourier analytic transference principle from additive combinatorics. This technique originated from the work of Green \cite{Green} who developed it to establish Roth's theorem in primes. Variants of the transference principle have been developed suitable for different problems. See \cite{Prendiville} for a survey.  For a variant suitable for additive problems involving dense subsets of the primes, see \cite[Section 6]{Matomaki} or \cite{ChipeniukHamel}. For an almost-all version of the transference principle, see \cite{Wang}. 

This article is organized as follows. In Section \ref{sec2} we study the binary Goldbach problem in the local setting, leading to the proof of Theorem \ref{thm-binary}.
In Section \ref{sec3} we develop an almost-all variant of the transference principle. In Section \ref{sec4} we use this transference principle to prove Theorem \ref{density-prime-1/2}.

\section{Local results}\label{sec2}

 For a positive integer $m$, we write $\Z_m^*$ for the set of reduced residue classes modulo $m$. In this section we will prove Theroem \ref{thm-binary} by studying the binary problem in the local setting of a cyclic group. We will first prove the following Theorem \ref{local-3/4} which is an independent result and will not be needed in the subsequent proofs in our paper, however as we will show Theorem \ref{thm-binary} is essentially a consequence of the observation that Theorem \ref{local-3/4} is sharp.
\begin{theorem}\label{local-3/4}
Let $m$ be an odd squarefree positive integer and let $A,B \subset \Z_m^*$ be subsets. Assume that 
$$ |A| +|B| > \varphi(m)\left(2 - \prod_{p \mid m} \frac{p-2}{p-1} \right). $$ 
Then $A+B = \Z_m$.
\end{theorem}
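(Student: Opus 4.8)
The plan is to fix an arbitrary $c \in \Z_m$ and show $c \in A+B$; since $c$ is arbitrary this gives $A+B = \Z_m$. Note that $c \in A+B$ if and only if $A \cap (c-B) \neq \emptyset$, so it suffices to show this intersection is nonempty. First I would observe that $A \subseteq \Z_m^*$ and $c - B \subseteq c - \Z_m^*$, so both sets are contained in $\Z_m^* \cup (c - \Z_m^*)$, and hence by inclusion--exclusion
$$ |A \cap (c-B)| \ \ge\ |A| + |c-B| - |\Z_m^* \cup (c - \Z_m^*)| \ =\ |A| + |B| - 2\varphi(m) + |\Z_m^* \cap (c-\Z_m^*)|, $$
using $|c-B| = |B|$ and $|\Z_m^*| = |c - \Z_m^*| = \varphi(m)$.

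The key step is then a uniform lower bound on $|\Z_m^* \cap (c-\Z_m^*)|$. Here I would use the Chinese Remainder Theorem (this is where squarefreeness of $m$ enters): under $\Z_m \cong \prod_{p\mid m}\Z/p$, an element $x$ lies in $\Z_m^* \cap (c-\Z_m^*)$ iff $x \not\equiv 0$ and $x \not\equiv c \Mod{p}$ for every $p \mid m$, so the count factors as $\prod_{p\mid m} N_p$ with $N_p$ the number of residues mod $p$ outside $\{0, c\bmod p\}$. Since $m$ is odd, $p \ge 3$, and $N_p$ equals $p-1$ when $p \mid c$ and $p-2$ when $p \nmid c$; either way $N_p \ge p-2 \ge 1$. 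Therefore $|\Z_m^* \cap (c-\Z_m^*)| \ge \prod_{p\mid m}(p-2) = \varphi(m)\prod_{p\mid m}\frac{p-2}{p-1}$, with equality precisely when $\gcd(c,m)=1$.

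Substituting into the displayed inequality gives $|A \cap (c-B)| \ge |A| + |B| - \varphi(m)\bigl(2 - \prod_{p\mid m}\frac{p-2}{p-1}\bigr)$, which is strictly positive by the hypothesis of the theorem. Hence $A \cap (c-B) \neq \emptyset$, so $c \in A+B$, and as $c$ was arbitrary, $A+B = \Z_m$. I do not expect a genuine obstacle; the only subtle point is the uniform bound on $|\Z_m^* \cap (c-\Z_m^*)|$, whose worst case occurs at units $c$, and it is exactly this that calibrates the constant $2 - \prod_{p\mid m}\frac{p-2}{p-1}$ in the hypothesis --- the same sharpness that (per the authors' plan) underlies Theorem~\ref{thm-binary}.
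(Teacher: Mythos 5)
Your proposal is correct and is essentially the paper's own argument: the set $\Z_m^* \cap (c-\Z_m^*)$ is exactly the set $X$ in the paper, the CRT-based lower bound $\prod_{p\mid m}(p-2)$ is the same, and the final counting step is the same inclusion--exclusion. No issues.
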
 

\begin{proof}
Let $n \in \Z_m$ be arbitrary. Let
$$ X = \{x \in \Z_m^*: n-x \in \Z_m^*\}. $$
Then $x \in X$ if and only if $x \neq 0, n\pmod{p}$ for every $p\mid m$, and hence
$$ |X| \geq \prod_{p \mid m} (p-2). $$
It follows that the number of $x \in X$ such that $x \in A$ and $n-x \in B$ is at least
$$ |X| - |\Z_m^* \setminus A| - |\Z_m^*\setminus B| \geq |A| + |B| - 2\varphi(m) + \prod_{p \mid m}(p-2) > 0. $$
Pick any such $x$. Then $n = x + (n-x) \in A+B$, as desired.
\end{proof}

The lower bound for $|A| + |B|$ is sharp. Let $m = p_1p_2\cdots p_s$, where $p_1,\cdots,p_s$ are distinct odd primes. Define 
$$ A = \bigcup_{i=1}^s \{a \in \Z_m^*: a \pmod{p_i} \in X_i\}, \ \ 
B = \bigcup_{i=1}^s \{a \in \Z_m^*: a \pmod{p_i} \in Y_i\}, $$
where $X_i = Y_i = \{1\}$ for $1 \leq i \leq s-1$, and
$$ X_s = \{1,2,\cdots,x\}, \ \ Y_s= \{1,2,\cdots,y\}. $$
for some $1 \leq x, y < p_s$. Then $1 \notin A+B$ if $x+y \leq p_s$ and 
$$ |A| = \varphi(m)\left(1 - \frac{p_s-1-x}{p_s-1}\prod_{1\leq i \leq s-1} \frac{p_i-2}{p_i-1}\right), \ \ 
|B| = \varphi(m)\left(1 - \frac{p_s-1-y}{p_s-1}\prod_{1\leq i \leq s-1} \frac{p_i-2}{p_i-1}\right).
$$
Hence if we choose $x, y$ such that $x+y=p_s$ then
$$ |A| +|B| = \varphi(m)\left(2 - \prod_{p \mid m} \frac{p-2}{p-1} \right). $$ 
Moreover, if we choose $x = y = (p_s-1)/2$, then $A=B$ and we obtain $A \subset \Z_m^*$ with 
$$ |A| = \varphi(m)\left(1 - \frac{1}{2}\prod_{1\leq i \leq s-1} \frac{p_i-2}{p_i-1}\right) $$
such that $A+A \neq \Z_m$. Since the infinite product $\prod_p \frac{p-2}{p-1}$ diverges to $0$ we can suppose, for any $\eps > 0$, that $|A| > \varphi(m)(1-\eps).$ Now let $A'$ be the set of all primes which are congruent to some $a \in A$. By Dirichlet's theorem on primes in arithmetic progressions, $\vdelta(A') > 1-\eps$. Since $A+A \neq \Z_m$ it follows that $A'+A'$ does not contain any of the even integers in some fixed residue class modulo $m$ and so we immediately arrive at Theorem \ref{thm-binary}.

\section{A transference principle}\label{sec3}

We work in a cyclic group $\Z_N$. We adopt the normalization corresponding to the probability measure on the physical side $\Z_N$ and to the counting measure on the frequency side $\wh{\Z_N}$. Thus, the Fourier transform of a function $f: \Z_N\rightarrow \C$ is defined by
$$ \wh{f}(r) = \E_{n \in \Z_N} f(n) e_N(-rn) $$
for $r \in \Z_N$. For $p,q>0$, the norms $\|\wh{f}\|_p$ and $\|f\|_q$ are normalized as follows:
$$ \|\wh{f}\|_p = \Big(\sum_{r \in \Z_N} |\wh{f}(r)|^p\Big)^{1/p}, \ \ \|f\|_q = \Big(\E_{n \in \Z_N} |f(n)|^q\Big)^{1/q}. $$
For two functions $f_1,f_2:\Z_N\rightarrow\C$, their convolution $f_1*f_2$ is defined by
$$ f_1*f_2(n) = \E_{n_1 \in \Z_N} f_1(n_1) f_2(n-n_1). $$

\begin{proposition}\label{transference}
For $i \in \{1,2\}$, let $f_i, \nu_i: \Z_N\rightarrow \R_{\geq 0}$ be functions such that $f_i(n) \leq \nu_i(n)$ for every $n \in \Z_N$.  Let $\delta_i = \E_{n \in \Z_N} f_i(n)$. Let $\delta, \eta > 0$. Suppose that the following conditions hold.

\begin{enumerate}
\item $\delta_1+\delta_2 \geq 1+\delta $ for some $\delta > 0$. 

\item Each $f_i$ satisfies a mean value estimate in the sense that $\|\wh{f_i}\|_p \leq M$ for some $p \in (2,4)$ and $M \geq 1$.

\item Each $\nu_i$ has Fourier decay in the sense that $\|\wh{\nu_i-1}\|_{\infty} \leq c(\delta,\eta,p,M)$ for some sufficiently small constant $c(\delta,\eta,p,M)> 0$.

\end{enumerate}
Then $f_1*f_2(n) \geq \delta^3/1000$ for all but at most $\eta N$ values of $n \in \Z_N$.
\end{proposition}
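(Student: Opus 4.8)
The plan is to run a Fourier-analytic transference argument in the spirit of Green's treatment of Roth's theorem in the primes, adapted to the ``almost all'' conclusion. I will decompose each $f_i = f_i^\flat + f_i^\sharp$, where $f_i^\flat$ is pointwise bounded (by $1+o(1)$) and has exactly the same mean $\delta_i$ as $f_i$, while $f_i^\sharp$ has uniformly small Fourier coefficients. The density hypothesis (1) will force $f_1^\flat * f_2^\flat(n) \gg \delta$ for \emph{every} $n$, with no exceptional set; and the terms of $f_1*f_2 - f_1^\flat * f_2^\flat$, each carrying a factor $\wh{f_i^\sharp}$, will be small in $L^2(\Z_N)$, so that Chebyshev's inequality confines the set where $f_1*f_2$ is small to at most $\eta N$ points.

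For the decomposition, fix $\eps_0 = \eps_0(\delta,\eta,p,M) > 0$, to be specified at the end, and let $\mathcal R = \{r \in \Z_N : |\wh{f_1}(r)| \ge \eps_0 \ \text{or}\ |\wh{f_2}(r)| \ge \eps_0\}$; hypothesis (2) and Chebyshev give $|\mathcal R| \le K := 2(M/\eps_0)^p$. With $\rho$ a sufficiently small multiple of $\eps_0$, let $B = \{n \in \Z_N : \|rn/N\| \le \rho \text{ for all } r \in \mathcal R\}$ be the associated (symmetric) Bohr set, so that $|B| \ge \kappa N$ for some $\kappa = \kappa(K,\rho) > 0$; put $\mu = (N/|B|)\mathbf 1_B$ (so $\E\mu = 1$), $\beta = \mu*\mu$, $f_i^\flat = f_i*\beta$, and $f_i^\sharp = f_i - f_i^\flat$. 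Then $\wh\beta = (\wh\mu)^2 \in [0,1]$ with $\wh\beta(0)=1$ and $\wh\beta(r) \ge 1-\eps_0$ for $r \in \mathcal R$, and this gives three facts. First, $\E f_i^\flat = \wh\beta(0)\wh{f_i}(0) = \delta_i$. Second, since $0 \le f_i \le \nu_i$ and $\beta \ge 0$, we have $0 \le f_i^\flat \le \nu_i * \beta = 1 + (\nu_i - 1)*\beta$ with $\|(\nu_i - 1)*\beta\|_\infty \le \|\wh{\nu_i-1}\|_\infty \|\wh\beta\|_1 = \|\wh{\nu_i - 1}\|_\infty (N/|B|) \le c\kappa^{-1}$, so choosing the constant $c = c(\delta,\eta,p,M)$ of hypothesis (3) to be at most $\tfrac{\delta}{100}\kappa$ yields $0 \le f_i^\flat \le 1 + \tfrac{\delta}{100}$. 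Third, $\wh{f_i^\sharp}(r) = \wh{f_i}(r)(1 - \wh\beta(r))$ is at most $\eps_0$ in absolute value both for $r \in \mathcal R$ (since $1 - \wh\beta(r)$ is small there, by the choice of $\rho$) and for $r \notin \mathcal R$ (since $|\wh{f_i}(r)| < \eps_0$ and $0 \le 1-\wh\beta \le 1$), so $\|\wh{f_i^\sharp}\|_\infty \le \eps_0$; and $0 \le 1 - \wh\beta \le 1$ also gives $\|\wh{f_i^\flat}\|_p, \|\wh{f_i^\sharp}\|_p \le M$.

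For the main term, set $g_i = \min(f_i^\flat,1) \in [0,1]$, so $\E g_i \ge \delta_i - \tfrac{\delta}{100}$ and $f_1^\flat * f_2^\flat \ge g_1 * g_2$; the pointwise inequality $g_1(x)g_2(n-x) \ge g_1(x) + g_2(n-x) - 1$ for $[0,1]$-valued functions, averaged over $x$ and combined with hypothesis (1), gives
\[ f_1^\flat * f_2^\flat(n) \ \ge\ \E g_1 + \E g_2 - 1 \ \ge\ \delta_1 + \delta_2 - 1 - \tfrac{\delta}{50} \ \ge\ \tfrac{49}{50}\delta \qquad \text{for all } n \in \Z_N. \]
For the error $E := f_1*f_2 - f_1^\flat * f_2^\flat$ one has $\wh E = \wh{f_1}\,\wh{f_2^\sharp} + \wh{f_1^\sharp}\,\wh{f_2^\flat}$, and by Parseval and Hölder with exponents $\tfrac{p}{2}$ and $\tfrac{p}{p-2}$,
\[ \big\|\wh{f_1^\sharp}\,\wh{f_2^\flat}\big\|_2^2 \ \le\ \Big(\sum_r |\wh{f_2^\flat}(r)|^p\Big)^{2/p}\Big(\sum_r |\wh{f_1^\sharp}(r)|^{\frac{2p}{p-2}}\Big)^{\frac{p-2}{p}} \ \le\ \eps_0^{\,4-p}\,M^p, \]
where the last bound uses $\|\wh{f_2^\flat}\|_p \le M$, $\|\wh{f_1^\sharp}\|_p \le M$ and $\|\wh{f_1^\sharp}\|_\infty \le \eps_0$ (the exponent $\tfrac{2p}{p-2}$ strictly exceeds $p$ precisely because $p<4$, which is exactly what makes $\eps_0$ occur with a positive power). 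The same estimate holds for the other term, so $\|E\|_2^2 \le 4\eps_0^{\,4-p}M^p$. Since $\delta \le \delta_1 + \delta_2 - 1 \le 3$, the set of $n$ with $f_1*f_2(n) < \delta^3/1000$ is contained in $\{n : |E(n)| > \delta/2\}$, whose cardinality is at most $4\delta^{-2}\|E\|_2^2 N \le 16\,\delta^{-2}M^p \eps_0^{\,4-p}\,N$. Choosing $\eps_0$ small enough that $16\,\delta^{-2}M^p\eps_0^{\,4-p} \le \eta$ — possible because $4-p>0$ — and then reading off $K$, $\rho$, $\kappa$, and finally $c$, all depending only on $\delta,\eta,p,M$, completes the argument (and in fact proves the stronger bound $f_1*f_2(n) \ge \tfrac{1}{2}\delta$ off the exceptional set).

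The step I expect to be the main obstacle is the decomposition itself: one must simultaneously keep $f_i^\flat$ bounded by $1+o(1)$ — this is exactly where the Fourier-decay hypothesis (3) on the majorant enters, through $\|\wh\beta\|_1 = N/|B|$ together with the freedom to let $c$ depend on the (tiny) Bohr-set density $\kappa$ — preserve the mean $\E f_i^\flat = \delta_i$ exactly, and render $f_i^\sharp$ Fourier-uniform at scale $\eps_0$. Once these are in place, the remainder is routine Fourier bookkeeping, the only other genuine input being the use of $p<4$ in the $L^2$ estimate of the error.
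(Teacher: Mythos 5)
Your proof is correct and follows essentially the same route as the paper: your smoothing $f_i^\flat = f_i*\beta$ over a Bohr set of the large spectrum is exactly the paper's approximant $g_i(n) = \E_{b_1,b_2\in B} f_i(n+b_1-b_2)$, and your direct $L^2$-plus-Chebyshev treatment of the error is the dual form of the paper's argument (which pairs $g_1*g_2-f_1*f_2$ with $1_E$), both hinging on $p<4$ via H\"older/interpolation between $\ell^\infty$ and $\ell^{p/2}$. The one genuine (minor) variation is your main-term bound: truncating to $\min(f_i^\flat,1)$ and using $ab\ge a+b-1$ for $[0,1]$-valued functions gives $f_1^\flat*f_2^\flat(n)\gg\delta$ for every $n$, which is cleaner and quantitatively stronger than the paper's level-set/support-intersection argument yielding only $\gg\delta^3$.
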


To prove Proposition \ref{transference}, first we construct in Lemma \ref{lem:transference1} decompositions $f_i = g_i + h_i$ for each $i \in \{1,2\}$, such that $g_i$ is essentially $1$-bounded and $h_i$ is Fourier uniform in the sense that $\|\widehat{h_i}\|_{\infty} = o(1)$.  Then we show in Lemma \ref{lem:transference2} that $g_1*g_2(n) \gg_{\delta} 1$ for all $n \in \Z_N$ using hypothesis (1) about the sizes of $\delta_1,\delta_2$. Finally we show in Lemma \ref{lem:transference3} that $f_1*f_2(n) \gg_{\delta} 1$ for almost all $n$, using a standard Fourier analytic argument.

We now turn to the details. Let $\eps > 0$ be a small constant to be chosen later in terms of $\delta,\eta,p,M$.

\begin{lemma}\label{lem:transference1}
Let the notations and assumptions be as above. For each $i \in \{1,2\}$, we may construct an approximant $g_i : \Z_N \rightarrow \R_{\geq 0}$ of $f_i$ with the following properties:
\begin{enumerate}
\item $\E_{n \in \Z_N} g_i(n) = \delta_i$.
\item $\|g_i\|_{\infty}  \leq 1 + \delta/10$.
\item $\|\wh{f_i} - \wh{g_i}\|_{\infty} \leq \eps$.
\item $\|\wh{g_i}\|_p \leq M$.
\end{enumerate}
\end{lemma}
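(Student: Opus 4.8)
The plan is to carry out the Fourier smoothing that is standard in transference arguments. For each $i\in\{1,2\}$ I would take $g_i=f_i*\beta_i$, where $\beta_i$ is a Fej\'er‑type kernel adapted to the large spectrum of $f_i$. Concretely, fix small parameters $\eps_1,\rho>0$ to be chosen in terms of $\eps$ at the end, put
$$ R_i = \{0\}\cup\{r\in\Z_N : |\wh{f_i}(r)|\geq \eps_1\}, $$
let $B_i=\{n\in\Z_N : \|rn/N\|\leq\rho \text{ for all } r\in R_i\}$ be the associated Bohr set, write $\mu_i=(N/|B_i|)\mathbf{1}_{B_i}$ for its normalised indicator (a probability measure, so $\wh{\mu_i}(0)=1$ and $|\wh{\mu_i}(r)|\leq 1$ for all $r$), set $\widetilde{\mu_i}(n):=\mu_i(-n)$ and $\beta_i=\mu_i*\widetilde{\mu_i}$, and finally $g_i=f_i*\beta_i\geq 0$. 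The role of the self‑convolution is that $\wh{\beta_i}(r)=|\wh{\mu_i}(r)|^2\in[0,1]$, with $\wh{\beta_i}(0)=1$ and $\sum_r \wh{\beta_i}(r)=\beta_i(0)=\E_n\mu_i(n)^2=N/|B_i|$.

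Granting this, properties (1) and (4) are immediate: $\E_n g_i(n)=\wh{f_i}(0)\wh{\beta_i}(0)=\delta_i$, and $\|\wh{g_i}\|_p=\|\wh{f_i}\wh{\beta_i}\|_p\leq\|\wh{f_i}\|_p\leq M$ since $|\wh{\beta_i}|\leq 1$. For (3) I would write $\wh{g_i}(r)-\wh{f_i}(r)=\wh{f_i}(r)(\wh{\beta_i}(r)-1)$ and split on whether $r\in R_i$: off $R_i$ the modulus is at most $2\eps_1$, while for $r\in R_i$ every $n\in B_i$ has $|e_N(-rn)-1|\leq 2\pi\rho$, hence $|\wh{\mu_i}(r)-1|\leq 2\pi\rho$, hence $|\wh{\beta_i}(r)-1|\leq 4\pi\rho$, so the modulus is at most $4\pi\rho\|f_i\|_1\leq 8\pi\rho$ (using $\|f_i\|_1=\delta_i\leq\E_n\nu_i(n)\leq 2$). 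Choosing $\eps_1\leq\eps/4$ and $\rho\leq\eps/(16\pi)$ then gives $\|\wh{f_i}-\wh{g_i}\|_\infty\leq\eps$. At this stage hypothesis (2) is used only to control the Bohr set: from $\eps_1^p|R_i|\leq\|\wh{f_i}\|_p^p\leq M^p$ we get $|R_i|\leq 1+(M/\eps_1)^p$, and then the usual pigeonhole lower bound for Bohr sets yields $|B_i|\geq\kappa N$ for some $\kappa=\kappa(\eps_1,\rho,p,M)>0$, so that $\sum_r\wh{\beta_i}(r)=N/|B_i|\leq 1/\kappa$.

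The one substantive step is (2), and this is where I expect the main obstacle to lie. Since $f_i\leq\nu_i$ and $\beta_i\geq 0$ we have $g_i\leq\nu_i*\beta_i$ pointwise, and expanding in Fourier,
$$ (\nu_i*\beta_i)(n) = \E_n\nu_i(n) + \sum_{r\neq 0}\wh{\beta_i}(r)\,\wh{\nu_i-1}(r)\,e_N(rn) \leq \E_n\nu_i(n) + \|\wh{\nu_i-1}\|_\infty\sum_r\wh{\beta_i}(r), $$
where the inequality uses $\wh{\beta_i}(r)\geq 0$. With $\|\wh{\nu_i-1}\|_\infty\leq c$ (which also forces $\E_n\nu_i(n)\leq 1+c$) and $\sum_r\wh{\beta_i}(r)\leq 1/\kappa$ from the previous paragraph, this gives $\|g_i\|_\infty\leq 1+c(1+1/\kappa)$, which is $\leq 1+\delta/10$ provided $c$ is small enough in terms of $\delta$ and $\kappa$; since $\kappa$ depends only on $\eps_1,\rho,p,M$, and hence ultimately on $\eps,p,M$, this is compatible with the admissible dependence $c=c(\delta,\eta,p,M)$ in hypothesis (3) once $\eps$ has been fixed in terms of $\delta,\eta,p,M$. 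The naive approach here would be to bound $\|g_i\|_\infty\leq\|\nu_i*\mu_i\|_\infty$ using a single Bohr convolution, but that would require controlling $\sum_r|\wh{\mu_i}(r)|$, which is of size $\asymp\sqrt{N}$ and useless; passing to the Fej\'er kernel $\beta_i=\mu_i*\widetilde{\mu_i}$, with its nonnegative Fourier transform, is exactly the device that replaces this divergent sum by $\sum_r\wh{\beta_i}(r)=N/|B_i|$, which is bounded precisely because the mean‑value hypothesis (2) on $f_i$ forces the Bohr set to have positive density.
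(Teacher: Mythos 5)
Your construction $g_i=f_i*\mu_i*\widetilde{\mu_i}$ is exactly the paper's approximant $g_i(n)=\E_{b_1,b_2\in B_i}f_i(n+b_1-b_2)$ written in convolution language, and every verification step (large spectrum bounded via the $L^p$ hypothesis, Bohr set of positive density by pigeonhole, nonnegativity of $\wh{\beta_i}$ plus the Fourier decay of $\nu_i$ for the $L^\infty$ bound, the case split on $R_i$ for property (3), and $|\wh{g_i}|\leq|\wh{f_i}|$ for property (4)) matches the paper's argument. The proof is correct and essentially identical to the one in the paper.
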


The statement of the lemma is analogous to \cite[Lemma 4.2]{Shao} (where we caution that the functions and the Fourier transforms are normalized differently), and the proof follows the same arguments as in \cite[Proposition 5.1]{GreenTao}. For completeness, we include a full proof.

\begin{proof}
For convenience, we drop the dependence on $i$, writing $f = f_i$, $g = g_i$, $\nu = \nu_i$.
Define the large spectrum of $f$ to be 
$$
R = \{r \in \Z_N : |\wh{f}(r)| \geq \eps \}.
$$ 
From the mean value estimate $\|\widehat{f}\|_p \leq M$ it follows that
$$ \eps^p |R| \leq \sum_{r \in \Z_N} |\widehat{f}(r)|^p \leq M^p, $$
and hence $|R| \leq (M/\eps)^p$. Define the  Bohr set 
$$
B = \{x \in \Z_N : |e_N(xr) -1| \leq \eps \text{ for each }r \in R\}.
$$ 
By the pigeonhole principle (see \cite[Lemma 4.20]{TaoVu}), it follows that $|B| \geq (c\eps)^{|R|}N$ for some absolute constant $c > 0$ and thus $|B| \gg_{\eps,p,M} N$. Now define the approximant $g: \Z_N \rightarrow \R_{\geq 0}$ by
$$  g(n) = \E_{b_1,b_2 \in B} f(n+b_1-b_2). $$
We will verify that $g$ satisfies the four desired properties.

Property (1) follows trivially by the definition of $g$. To verify property (2), note that for each $n \in \Z_N$ we have
$$ g(n) \leq \E_{b_1,b_2 \in B} \nu(n+b_1-b_2) = \sum_{r \in \Z_N} \widehat{\nu}(r) e_N(-rn) \left|\E_{b \in B} e_N(rb)\right|^2. $$
By the Fourier decay property of $\nu$, we may replace $\widehat{\nu}(r)$ above by $1_{r=0}$ at the cost of an error at most $c = c(\delta,\eta,p,M)$ for some sufficiently small constant $c(\delta,\eta,p,M) > 0$. It follows that
$$ g(n) \leq 1 + O\Big( c\sum_{r \in \Z_N} \left|\E_{b \in B} e_N(rb)\right|^2\Big) = 1 + O\Big(\frac{cN}{|B|}\Big) = 1 + O_{\eps,p,M}(c). $$
Since $\eps$ is chosen in terms of $\delta,\eta,p,M$, we may ensure that $g(n) \leq  1 + \delta/10$ by choosing $c$ sufficiently small in terms of $\delta,\eta,p,M$.

To verify property (3), note that for each $r \in \Z_N$ we have
$$ |\widehat{f}(r) - \widehat{g}(r)| = |\widehat{f}(r)|\left(1 - |\E_{b \in B} e_N(rb)|^2\right) \leq 2|\widehat{f}(r)| \E_{b \in B} |1 - e_N(rb)|. $$
We divide into two cases according to whether $r \in R$ or not. If $r \in R$, then $|e_N(rb)-1| \leq \eps$ for each $b \in B$ by the definition of the Bohr set $B$, and hence 
$$  |\widehat{f}(r) - \widehat{g}(r)| \leq 2\eps |\widehat{f}(r)| \leq 4\eps, $$
using the trivial bound 
\begin{equation}\label{eq:trivial}
|\widehat{f}(r)| \leq \E_{n \in \Z_N} f(n) \leq \E_{n \in \Z_N} \nu(n) = \widehat{\nu}(0) \leq 2 \end{equation}
(say). If $r \notin R$, then $|\widehat{f}(r)| \leq\eps$ by the definition of the large spectrum $R$, and hence $|\widehat{f}(r) - \widehat{g}(r)| \leq 4\eps$. This verifies property (3) (after replacing $\eps$ in our argument by $\eps/4$).

Finally, property (4) follows easily from the fact 
\begin{equation}\label{eq:ghat-bound}
 |\widehat{g}(r)| = |\widehat{f}(r)| \cdot |\E_{b \in B} e_N(rb)|^2 \leq |\widehat{f}(r)| 
\end{equation}

and the mean value estimate for $f$.
\end{proof}

\begin{lemma}\label{lem:transference2}
Let the notations and assumptions be as above, and let $g_1,g_2$ be the approximants constructed in Lemma \ref{lem:transference1}. Then $g_1*g_2(n) \geq \delta^3/200$ for every $n \in \Z_N$.
\end{lemma}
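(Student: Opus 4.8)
The plan is to use only four facts: each $g_i\ge 0$, $\E_{n\in\Z_N} g_i(n)=\delta_i$ and $\|g_i\|_\infty\le 1+\delta/10$ (properties (1)--(2) from Lemma~\ref{lem:transference1}), together with $\delta_1+\delta_2\ge 1+\delta$ (hypothesis (1) of Proposition~\ref{transference}). A first, cheap observation is that these already force $\delta$ to be bounded: since $\delta_i=\E_{n\in\Z_N} g_i(n)\le\|g_i\|_\infty\le 1+\delta/10$ we get $1+\delta\le\delta_1+\delta_2\le 2+\delta/5$, hence $\delta\le 5/4$. This lets me treat $1+\delta/10\le 9/8$ as a harmless constant and keeps the argument self-contained: I will not need any lower bound on the individual $\delta_i$, nor any information about how small the Fourier-decay constant in hypothesis (3) is. The whole proof is then: a nonnegative function of mean $\delta_i$ and sup-norm close to $1$ must be at least (a fixed constant times $\delta$) on a set of density close to $\delta_i$, and one intersects the two level sets via inclusion--exclusion.

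Concretely I would fix the threshold $\tau=\delta/4$ and put $S_i=\{x\in\Z_N:\ g_i(x)\ge\tau\}$. Splitting the mean $\E_{n\in\Z_N}g_i(n)$ over $S_i$ and its complement and using $0\le g_i\le 1+\delta/10$ gives $\delta_i\le (1+\delta/10)\,|S_i|/N+\tau$, i.e.\ $|S_i|/N\ge(\delta_i-\tau)/(1+\delta/10)$; this is a genuine inequality of reals, valid even when the right-hand side is negative, so no control on $\delta_i$ individually is required. Adding the two estimates and using $\delta_1+\delta_2\ge 1+\delta$ together with $\tau=\delta/4$ and $\delta\le 5/4$,
$$ \frac{|S_1|}{N}+\frac{|S_2|}{N}-1\ \ge\ \frac{\delta_1+\delta_2-\delta/2}{1+\delta/10}-1\ \ge\ \frac{1+\delta/2}{1+\delta/10}-1\ =\ \frac{2\delta/5}{1+\delta/10}\ \ge\ \frac{2\delta/5}{9/8}\ >\ \frac{\delta}{3}. $$

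Finally, for an arbitrary $n\in\Z_N$ I would restrict the convolution to these level sets. By inclusion--exclusion $|S_1\cap(n-S_2)|\ge |S_1|+|S_2|-N\ge \delta N/3>0$, and for every $m$ in this (nonempty) set we have $g_1(m)\ge\tau$ and $g_2(n-m)\ge\tau$, so, all terms being nonnegative,
$$ g_1*g_2(n)=\E_{m\in\Z_N} g_1(m)\,g_2(n-m)\ \ge\ \frac{|S_1\cap(n-S_2)|}{N}\,\tau^2\ \ge\ \frac{\delta}{3}\cdot\frac{\delta^2}{16}\ =\ \frac{\delta^3}{48}\ \ge\ \frac{\delta^3}{200}, $$
uniformly in $n$, which is the claim. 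There is no serious obstacle here: the argument is an elementary pigeonhole-plus-inclusion--exclusion computation. The only point needing a little care is the balancing of constants — the threshold must be low enough (roughly $\tau<9\delta/20$) that inclusion--exclusion produces a positive overlap of density $\asymp\delta$, yet large enough that the factor $\tau^2$ multiplied by that overlap still beats $\delta^3/200$; this is a one-parameter trade-off and any $\tau=c\delta$ with $c$ a small absolute constant (e.g.\ $\tau=\delta/4$ or $\tau=\delta/5$) does the job, once the bound $\delta\le 5/4$ has been recorded so that $1+\delta/10$ is negligible.
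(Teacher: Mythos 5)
Your proof is correct and follows essentially the same route as the paper's: define level sets $\{g_i\geq \tau\}$ with $\tau\asymp\delta$, lower-bound their densities using $\E g_i=\delta_i$ and $\|g_i\|_\infty\leq 1+\delta/10$, and intersect via inclusion--exclusion. The only differences are cosmetic (the paper takes $\tau=\delta/10$ and absorbs the $(1+\delta/10)$ factor slightly differently, landing exactly on $\delta^3/200$, whereas your $\tau=\delta/4$ with the observation $\delta\leq 5/4$ gives the stronger $\delta^3/48$).
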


\begin{proof}
For $i \in \{1,2\}$, define $A_i$ to be the essential support of $g_i$:
$$ A_i = \{n \in \Z_N : g_i(n) \geq \delta/10\}. $$
Since $\|g_i\|_{\infty} \leq 1 + \delta/10$, we have
$$ \delta_iN = \sum_{n \in \Z_N} g_i(n) \leq \tfrac{1}{10}\delta N + \left(1+\tfrac{\delta}{10}\right) |A_i| \leq \tfrac{1}{5}\delta N + |A_i|. $$
Hence $|A_i| \geq (\delta_i - \delta/5) N$. Thus for every $n \in \Z_N$,
$$ |A_1 \cap (n-A_2)| \geq |A_1| + |A_2| - N \geq \left(\delta_1 + \delta_2 - 1 - \tfrac{2}{5}\delta\right) N \geq \tfrac{1}{2} \delta N, $$
and hence
$$ g_1*g_2(n) \geq N^{-1} \left(\tfrac{\delta}{10}\right)^2 |A_1 \cap (n-A_2)| \geq \tfrac{1}{200}\delta^3. $$
This completes the proof.
\end{proof}

\begin{lemma}\label{lem:transference3}
Let the notations and assumptions be as above, and let $g_1,g_2$ be the approximants constructed in Lemma \ref{lem:transference1}. Let $E$ be the exceptional set defined as
$$ E = \{n \in \Z_N: f_1*f_2(n) \leq \delta^3/1000\}, $$
Then $|E| \leq \eta N$.
\end{lemma}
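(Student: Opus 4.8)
The goal is to show that the set $E$ where $f_1*f_2$ is small has density at most $\eta$. The natural approach is to compare $f_1*f_2$ with $g_1*g_2$, which by Lemma \ref{lem:transference2} is $\geq \delta^3/200$ pointwise. Writing $f_i = g_i + h_i$ with $h_i = f_i - g_i$, we expand
$$ f_1*f_2 = g_1*g_2 + g_1*h_2 + h_1*g_2 + h_1*h_2. $$
So on the exceptional set $E$ we must have $|g_1*h_2(n) + h_1*g_2(n) + h_1*h_2(n)| \geq \delta^3/200 - \delta^3/1000 \gg \delta^3$. Hence it suffices to show each of the three "error" convolutions is small in $L^2(\Z_N)$, since then Chebyshev's inequality bounds $|E|$.

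\textbf{Controlling the error terms.} Each error convolution should be estimated via Fourier analysis: for any functions $u,v$ one has $\|u*v\|_2^2 = \sum_r |\widehat{u}(r)|^2|\widehat{v}(r)|^2 \leq \|\widehat{u}\|_\infty^2 \|\widehat{v}\|_2^2$, and similarly with the roles reversed, so $\|u*v\|_2 \leq \|\widehat{u}\|_\infty \min(\|\widehat{v}\|_2, \ldots)$. The key fact is that $\|\widehat{h_i}\|_\infty \leq \|\widehat{f_i} - \widehat{g_i}\|_\infty \leq \eps$ by Lemma \ref{lem:transference1}(3), so each $h_i$ is Fourier-uniform. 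For the term $g_1*h_2$ I would bound $\|g_1*h_2\|_2 \le \|\widehat{h_2}\|_\infty \|\widehat{g_1}\|_2 \le \eps \|\widehat{g_1}\|_2$; here $\|\widehat{g_1}\|_2^2 = \|g_1\|_2^2 \le \|g_1\|_\infty \cdot \E g_1 \le (1+\delta/10)\cdot 2 = O(1)$ using Lemma \ref{lem:transference1}(1),(2) and \eqref{eq:trivial}. The same works for $h_1*g_2$. For the genuinely bilinear term $h_1*h_2$, I would interpolate between the $\ell^\infty$ bound on $\widehat{h_i}$ and the $\ell^p$ mean-value bound: since $\|\widehat{h_i}\|_p \le \|\widehat{f_i}\|_p + \|\widehat{g_i}\|_p \le 2M$ by the triangle inequality and Lemma \ref{lem:transference1}(4), and $\|\widehat{h_i}\|_\infty \le \eps$, one gets $\|\widehat{h_i}\|_q \le (2M)^{p/q}\eps^{1-p/q}$ for any $q \ge p$, in particular a good $\ell^4$ bound $\|\widehat{h_i}\|_4 \le (2M)^{p/4}\eps^{1-p/4}$. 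Then $\|h_1*h_2\|_2^2 = \sum_r |\widehat{h_1}(r)|^2|\widehat{h_2}(r)|^2 \le \|\widehat{h_1}\|_4^2\|\widehat{h_2}\|_4^2$ by Cauchy--Schwarz, giving $\|h_1*h_2\|_2 \le (2M)^{p/2}\eps^{2-p/2}$, and since $p < 4$ the exponent $2 - p/2 > 0$ so this is small.

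\textbf{Conclusion.} Combining, $\|f_1*f_2 - g_1*g_2\|_2 \ll_{p,M} \eps^{c}$ for some $c = c(p) > 0$ (taking $c = \min(1, 2-p/2)$). By Chebyshev, the number of $n$ with $|f_1*f_2(n) - g_1*g_2(n)| > \delta^3/500$ is at most $(500/\delta^3)^2 \|f_1*f_2 - g_1*g_2\|_2^2 \cdot N \ll_{\delta,p,M} \eps^{2c} N$. On the complement of this set, $f_1*f_2(n) \ge g_1*g_2(n) - \delta^3/500 \ge \delta^3/200 - \delta^3/500 > \delta^3/1000$, so $E$ is contained in this small set. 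Choosing $\eps$ small enough in terms of $\delta, \eta, p, M$ (which we are free to do, as $\eps$ was declared to be chosen later) makes $\eps^{2c} \ll_{\delta,p,M} \eta$, hence $|E| \le \eta N$.

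\textbf{Main obstacle.} The only delicate point is the bilinear term $h_1*h_2$: a crude bound using only $\|\widehat{h_i}\|_\infty \le \eps$ and $\|\widehat{h_i}\|_2^2 = O(1)$ would give $\|h_1*h_2\|_2 \le \eps \cdot O(1)$, which is already fine — so in fact even the bilinear term succumbs to the same simple argument as the mixed terms, and the $\ell^p$ mean-value hypothesis is needed not here but rather was already used in constructing the $g_i$. The real content is entirely in Lemma \ref{lem:transference1}; here it is just bookkeeping with Parseval and Chebyshev. One should be slightly careful that the various implied constants depend only on the allowed parameters $\delta, \eta, p, M$, so that the final choice of $\eps$ (and hence of the constant $c(\delta,\eta,p,M)$ in hypothesis (3)) is legitimate.
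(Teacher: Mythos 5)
Your main argument is correct and is essentially the paper's: both proofs reduce the problem to showing $\|\wh{f_1}\wh{f_2}-\wh{g_1}\wh{g_2}\|_2 \ll_{p,M}\eps^{c}$ for some $c=c(p)>0$ by combining the uniformity bound $\|\wh{f_i}-\wh{g_i}\|_\infty\le\eps$ with the $\ell^p$ mean-value bounds via interpolation (the paper interpolates the difference of products directly between $\ell^\infty$ and $\ell^{p/2}$ and then pairs $g_1*g_2-f_1*f_2$ against $1_E$ by Cauchy--Schwarz, whereas you split into the three terms $g_1*h_2$, $h_1*g_2$, $h_1*h_2$ and finish with Chebyshev; these are cosmetically different but the same mechanism, and both yield $|E|\ll_{\delta,p,M}\eps^{c'}N$ for a positive exponent $c'$, after which $\eps$ is chosen small).

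One remark in your closing paragraph is wrong, however, and it misidentifies where the hypotheses are used. You claim that $\|\wh{h_i}\|_2^2=O(1)$, so that the bilinear term would succumb to the crude bound $\|h_1*h_2\|_2\le\|\wh{h_1}\|_\infty\|\wh{h_2}\|_2$ and the $\ell^p$ hypothesis is ``not needed here.'' But $\|\wh{h_i}\|_2=\|h_i\|_2\ge\|f_i\|_2-\|g_i\|_2$, and $\|f_i\|_2^2=\E_n f_i(n)^2$ is \emph{not} bounded: $f_i$ is majorized only by $\nu_i$, which is an unbounded (normalized prime-counting) weight, so in the application $\|f_i\|_2^2\asymp\log N\to\infty$. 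The whole reason the proposition assumes a restriction estimate $\|\wh{f_i}\|_p\le M$ with $p<4$ (rather than a bound on $\|\wh{f_i}\|_2$, which would be false) is precisely to control the bilinear term $h_1*h_2$ by the interpolation you carry out. So your interpolation step is not an optional refinement but the essential point; fortunately you did carry it out correctly, so the proof stands once that erroneous aside is deleted.
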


\begin{proof}
Let $\alpha = |E|/N$. Consider the inner product $I = \langle g_1*g_2-f_1*f_2, 1_E\rangle$. On the one hand, we have
$$ I = \frac{1}{N} \sum_{n \in E} \left(g_1*g_2(n) - f_1*f_2(n)\right) \gg \delta^3 \alpha $$
by Lemma \ref{lem:transference2} and the definition of $E$.
On the other hand, by Plancherel's identity and Cauchy-Schwarz inequality we have
$$ I = \langle \wh{g_1}\wh{g_2} - \wh{f_1}\wh{f_2}, \wh{1_E}\rangle \leq \|\wh{1_E}\|_2\cdot \|\wh{g_1}\wh{g_2}-\wh{f_1}\wh{f_2}\|_2. $$
Clearly $\|\wh{1_E}\|_2 = \|1_E\|_2 = \alpha^{1/2}$. Thus the two inequalities above together imply that
$$ \alpha \ll \delta^{-6} \|\wh{g_1}\wh{g_2}-\wh{f_1}\wh{f_2}\|_2^2. $$
Note that for every $r \in \Z_N$ we have
$$ |\wh{g_1}(r)\wh{g_2}(r) - \wh{f_1}(r)\wh{f_2}(r)| \leq |\wh{g_1}(r)| \cdot |\wh{g_2}(r)-\wh{f_2}(r)| + |\wh{f_2}(r)| \cdot |\wh{g_1}(r) - \wh{f_1}(r)| \leq 4\eps, $$
where we used the property that $\|\wh{f_i}-\wh{g_i}\|_{\infty} \leq \eps$ and the trivial  bounds $|\widehat{f_i}(r)| \leq 2$ and $|\widehat{g_i}(r)|  \leq 2$ (see \eqref{eq:trivial} and \eqref{eq:ghat-bound}).
Moreover by Cauchy-Schwarz, condition $(4)$ of Lemma \ref{lem:transference1} and the mean value estimate $(2)$ of Proposition \ref{transference}, we have
$$
\|\wh{g_1}\wh{g_2}-\wh{f_1}\wh{f_2}\|_{p/2} \leq \|\wh{g_1}\wh{g_2}\|_{p/2} + \|\wh{f_1}\wh{f_2}\|_{p/2} \leq \|\wh{g_1}\|_p \|\wh{g_2}\|_p + \|\wh{f_1}\|_p\|\wh{f_2}\|_p \leq 2M^2. 
$$
Since $p \in (2,4)$, it follows that
$$ \|\wh{g_1}\wh{g_2}-\wh{f_1}\wh{f_2}\|_2 \leq \|\wh{g_1}\wh{g_2}-\wh{f_1}\wh{f_2}\|_{\infty}^{1-p/4} \cdot \|\wh{g_1}\wh{g_2}-\wh{f_1}\wh{f_2}\|_{p/2}^{p/4} \ll_{p,M} \eps^{1-p/4} $$
and hence
$$ \alpha \ll_{p,M} \delta^{-6} \eps^{2-p/2}. $$
Thus we may ensure that $\alpha \leq \eta$ by choosing $\eps = (c\delta^6\eta)^{2/(4-p)}$, where $c = c(p,M) > 0$ is a sufficiently small constant.
\end{proof}

As mentioned previously, the proof of Proposition \ref{transference} is completed by combining Lemmas \ref{lem:transference1}, \ref{lem:transference2} and \ref{lem:transference3}.

\section{Binary Goldbach for dense subsets of primes}\label{sec4}

In this section we prove Theorem \ref{density-prime-1/2}.  Let $A \subset \CP$ be a subset satisfying the assumptions in Theorem \ref{density-prime-1/2}. Choose $\delta \in (0,1/2)$ such that $\vdelta(A; W,b) \geq 1/2 + \delta$ for every reduced residue class $b\pmod{W}$. Let $E$ be the exceptional set consisting of those even positive integers that cannot be written as a sum of two primes in $A$. It suffices to  show that
$$ \lim_{M\rightarrow\infty} \frac{|E \cap [(1-\delta^{10})M, M]|}{M} = 0. $$
Suppose, for the purpose of contradiction, that there exists a constant $\eta > 0$ such that 
\begin{equation}\label{eq:E-lower-bound} 
|E \cap [(1-\delta^{10})M, M]| \geq \eta M
\end{equation}
for each $M = M_i$ in an infinite increasing sequence $M_1 < M_2 < \cdots$ of positive integers. Set $W = \prod_{p \leq z}p$, where $z$ is a constant sufficiently large in terms of $\delta,\eta$.  Recall that 
$$ \vdelta(A; W, b) = \liminf_{M\rightarrow\infty} \frac{|A_{W,b} \cap [1,M]|}{|\CP_{W,b} \cap [1,M]|}, $$
where, for a set $A \subset \Z$ and a residue class $b\pmod W$, the notation $A_{W,b}$ is defined by
$$ A_{W,b} = \{n \in A: n \equiv b\pmod{W}\}. $$
Thus, for some large value $M = M_i$, we have
$$ \frac{|A_{W,b} \cap [1,M]|}{|\CP_{W,b} \cap [1,M]|} \geq \frac{1}{2} + \frac{\delta}{2} $$
for each $b \in \Z_W^*$. Fix this value of $M$ for the remainder of the proof, and let $N = \lfloor M/W\rfloor$. 

By the pigeonhole principle, there exists an even residue class $r\pmod{W}$ such that
\begin{equation}\label{eq:EWr-lower-bound} 
|E_{W,r} \cap [(1-\delta^{10})M,M]| \geq \frac{\eta M}{W} \geq \eta N. 
\end{equation}
By the Chinese remainder theorem we can choose $b_1,b_2 \in \Z_W^*$ with $r = b_1+b_2$. For $i \in \{1,2\}$, define $f_i, \nu_i : \Z_N \rightarrow \R_{\geq 0}$ (naturally identifying $\Z_N$ with $\{1,2,\cdots,N\}$) by
$$ \nu_i(n) =
\begin{cases}
             \frac{\varphi(W)}{W}\log(Wn + b_i) &\text{if}\ Wn + b_i\in \CP, \\
 
     0 &\text{otherwise.}
\end{cases} $$
and
$$ f_i(n) =
\begin{cases}
             \frac{\varphi(W)}{W}\log(Wn + b_i) &\text{if}\ Wn + b_i \in A \text{ and } Wn+b_i \leq (1-\delta^5)M, \\
 
     0 &\text{otherwise.}
\end{cases} $$
We will show that $f_i,\nu_i$ satisfy the assumptions of Proposition \ref{transference}. Note that $\frac{\nu_i}{N}$ is the same as $\lambda_{b_i,W,N}$ in the notation of Green's paper \cite{Green} which we will appeal to. Clearly $0 \leq f_i(n) \leq \nu_i(n)$ for every $n$. The mean value estimates $\|\widehat{f_i}\|_p = O(1)$ follows from \cite[Lemma 6.6]{Green} with exponent $p=3$ (say). The Fourier decay property of $\nu_i$ follows from \cite[Lemma 6.2]{Green}, once $z$ is chosen large enough in terms of $\delta, \eta$. Now note that the average of $f_i(n)$ is 
$$ \delta_i = \E_{n \in \Z_N}f_i(n) = \frac{\varphi(W)}{NW}\sum_{p \in A_{W,b_i}\cap [1,(1-\delta^5)M]}\log{p}. $$
By restricting the sum over $p$ above to $p \in A_{W,b_i} \cap [M(\log M)^{-10}, (1-\delta^5)M]$ and noting that
\begin{equation*}
\begin{split}
|A_{W,b_i} \cap [M(\log M)^{-10}, (1-\delta^5)M]| &\geq |A_{W,b_i} \cap [1,M]| - M(\log M)^{-10} - \frac{\delta^4M}{\varphi(W)\log M} \\
 &\geq \left(\frac{1}{2} + \frac{\delta}{3}\right) \frac{M}{\varphi(W)\log M}, 
\end{split}
\end{equation*}
we deduce that
$$ \delta_i \geq \frac{\varphi(W)}{NW} \left(\frac{1}{2} + \frac{\delta}{3}\right) \frac{M}{\varphi(W)\log M} (\log M - 10\log\log M) \geq \frac{1}{2} + \frac{\delta}{4}. $$
Hence, we may apply Proposition \ref{transference} to the functions $f_i,\nu_i$ (with $\delta,\eta$ replaced by $\delta/2,\eta/2$, respectively) to  conclude that
$$ f_1*f_2(n) \gg \delta^3 $$
for all but at most $\eta N/2$ values of $n \in \Z_N$.  In view of \eqref{eq:EWr-lower-bound}, there exists $m \in E_{W,r} \cap [(1-\delta^{10})M, M]$ such that $(m-b_1-b_2)/W$ (naturally viewed as an element in $\Z_N$) is in the support of $f_1*f_2$. By the definition of $f_1,f_2$, this implies that we can write
$$ \frac{m-b_1-b_2}{W} \equiv n_1+n_2 \pmod{N} $$
for some positive integers $n_1,n_2$ with $Wn_i+b_i \in A$ and $Wn_i + b_i \leq (1-\delta^5)M$. The congruence above can be rewritten as
$$ m \equiv (Wn_1+b_1) + (Wn_2+b_2) \pmod{WN}. $$
Since $m \in [(1-\delta^{10})M, M]$ and $(Wn_1+b_1) + (Wn_2+b_2)  \leq (2-2\delta^5)M$, the congruence above must be an equality in the integers:
$$ m = (Wn_1+b_1) + (Wn_2+b_2). $$
This implies that $m$ can be written as the sum of two primes in $A$, contradicting $m \in E$.

\subsection*{Acknowledgements}
We thank the anonymous referee for helpful comments and suggestions.

\bibliographystyle{plain}
\bibliography{biblio}

\end{document}